\newtheoremstyle{theorem}{8pt\vfill}{8pt\vfill}{\itshape}{}{\bfseries}{.}{.5em}{}
\newtheoremstyle{paragraph}{8pt\vfill}{8pt\vfill}{}{}{\bfseries}{}{.5em}{}
\theoremstyle{theorem}                                       
\newtheorem{thm}{Theorem}
\newtheorem*{lemma}{Lemma}
\theoremstyle{definition}
\newtheorem*{rem}{Remark}
\DeclareMathOperator{\SL}{SL}      
\DeclareMathOperator{\GL}{GL}
\def\cG{\mathcal{G}}
\def\cX{\mathcal{X}}
\def\Z{\mathbb{Z}}
\def\C{\mathbb{C}}
\def\F{\mathbb{F}}
\def\Fun{{\F_1}}                             
\def\Gm{\mathbb{G}_m}
\def\A{\mathbb{A}}
\def\P{\mathbb{P}}
\title{Functional equations for zeta functions of $\Fun$-schemes}
\author{Oliver Lorscheid}
\address{City College of CUNY, Dept. Math., 160 Convent Ave., NYC, NY 10031, U.S.}
\email{olorscheid@ccny.cuny.edu}
\thanks{I like to thank Takashi Ono for drawing my attention to the symmetries occuring in the counting polynomials of split reductive group schemes, and I like to thank Markus Reineke for his explanations on the comparision theorem for liftable smooth varieties.}
\begin{document}

\begin{abstract}
 For a scheme $X$ whose $\F_q$-rational points are counted by a polynomial $N(q)=\sum a_iq^i$, the $\Fun$-zeta function is defined as $\zeta_\cX(s)=\prod(s-i)^{-a_i}$. Define $\chi=N(1)$. In this paper we show that if $X$ is a smooth projective scheme, then its $\Fun$-zeta function satisfies the functional equation $\zeta_\cX(n-s) = (-1)^\chi \zeta_\cX(s)$. We further show that the $\Fun$-zeta function $\zeta_\cG(s)$ of a split reductive group scheme $G$ of rank $r$ with $N$ positive roots satisfies the functional equation $\zeta_\cG(r+N-s) = (-1)^\chi  \big(\zeta_\cG(s) \big)^{(-1)^r}$.
\end{abstract}

\maketitle


\section{Introduction}

\noindent
In recent years around a dozen different suggestion of what a scheme over $\Fun$ should be appeared in literature (cf.\ \cite{LL10}). The common motivation for all these approaches is to provide a framework in which Deligne's proof of the Weyl conjectures can be transfered to characteristic $0$ in order to proof the Riemann hypothesis. Roughly speaking, $\Fun$ should be thought of as a field of coefficients for $\Z$, and $\Fun$-schemes $\cX$ should have a base extension $\cX_\Z$ to $\Z$ which is a scheme in the usual sense.

Though it is not clear yet whether one of the existing $\Fun$-geometries comes close to this goal, and thus in particular it is not clear what the appropriate notion of an $\Fun$-scheme should be, the zeta function $\zeta_\cX(s)$ of such an elusive $\Fun$-scheme $\cX$ is determined by the scheme $X=\cX_\Z$.

Namely, let $X$ be a variety of dimension $n$ over $\Z$, i.e.\ a scheme such that $X_k$ is an variety of dimension $n$ for any field $k$. Assume further that $X$ has a counting polynomial 
$$ N(q)\quad = \quad \sum_{i=0}^n a_i\ q^i\qquad \in \qquad \Z[q], $$
i.e.\ the number of $\F_q$-rational points is counted by $\#X(\F_q)=N(q)$ for every prime power $q$. If $X$ descents to an $\Fun$-scheme $\cX$, i.e.\ $\cX_\Z\simeq X$, then $\cX$ has the zeta function
$$ \zeta_\cX(s) \quad = \quad \lim_{q\to 1}\quad (q-1)^{\chi}\ \zeta_X(q,s) $$
where $\zeta_X(q,s) = \exp\left( \sum_{r\geq 1}N(q^r)q^{-sr}/r \right)$ is the zeta function of $X\otimes\F_q$ if $q$ is a prime power and $\chi=N(1)$ is the order the pole of $\zeta_X(q,s)$ in $q=1$ (cf.\ \cite{Soule04}). This expression comes down to
$$ \zeta_\cX(s) \quad = \quad \prod_{i=0}^n (s-i)^{-a_i} $$
(\cite[Lemme 1]{Soule04}). 

From this it is clear that $\zeta_\cX(s)$ is a rational function in $s$ and that its zeros (resp. poles) are at $s=i$ of order $-a_i$ for $i=0,\dotsc,n$. The only statement from the Weyl conjectures which is not obvious for zeta functions of $\Fun$-schemes is the functional equation. 

\bigskip

\section{The functional equation for smooth projective $\Fun$-schemes}

\noindent
Let $X$ be an (irreducible) smooth projective variety of dimesion $n$ with a counting polynomial $N(q)$. Let $b_0,\dotsc,b_{2n}$ be the Betti numbers of $X$, i.e. the dimensions of the singular homology groups $H_0(X_\C),\dots,H_{2n}(X_\C)$. By Poincar\'e duality, we know that $b_{2n-i}=b_i$. As a consequence of the comparision theorem for smooth liftable varieties and Deligne's proof of the Weil conjectures, we know that the counting polynomial is of the form
$$ N(q)\quad = \quad \sum_{i=0}^n b_{2i}\ q^i $$
and that $b_i=0$ if $i$ is odd (cf.\ \cite{Deitmar06} and \cite{Reineke08}). Thus $\chi=\sum_{i=0}^n b_{2i}$ is the Euler characteristic of $X_\C$ in this case (cf.\ \cite{Kurokawa05}).

Suppose $X$ has an elusive model $\cX$ over $\Fun$. Then $\cX$ has the zeta function $\zeta_\cX(s)=\prod_{i=0}^n (s-i)^{-b_{2i}}$.

\begin{thm}
 The zeta function $\zeta_\cX(s)$ satisfies the functional equation
 $$ \zeta_\cX(n-s) \quad = \quad (-1)^\chi\ \zeta_\cX(s) $$
 and the factor equals $-1$ if and only if $n$ is even and $b_n$ is odd.
\end{thm}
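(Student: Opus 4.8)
The plan is to reduce everything to the palindromic symmetry of the exponent data that Poincar\'e duality forces, and then carry out a direct substitution. Writing $a_i := b_{2i}$ for the coefficients of $N(q)$, the relation $b_{2n-i}=b_i$ specializes at the even indices to $b_{2(n-i)}=b_{2i}$, that is, $a_{n-i}=a_i$ for all $i$. This symmetry of the exponents is the only structural input I expect to need; everything after it is bookkeeping.

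With that in hand I would substitute $s\mapsto n-s$ directly into $\zeta_\cX(s)=\prod_{i=0}^n (s-i)^{-a_i}$. Each factor becomes $(n-s-i)^{-a_i}=(-1)^{-a_i}\,\bigl(s-(n-i)\bigr)^{-a_i}$, so pulling the signs out front yields a global constant $(-1)^{-\sum_i a_i}=(-1)^{\chi}$, where $\chi=N(1)=\sum_i a_i$. Reindexing the surviving product by $j=n-i$ and invoking $a_{n-i}=a_i$ turns $\prod_{i}\bigl(s-(n-i)\bigr)^{-a_i}$ back into $\prod_{j}(s-j)^{-a_j}=\zeta_\cX(s)$, which gives the functional equation $\zeta_\cX(n-s)=(-1)^{\chi}\zeta_\cX(s)$.

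For the sharpened statement about the sign I would run a parity count on $\chi=\sum_{i=0}^n a_i$, grouping its terms into the pairs $\{a_i,a_{n-i}\}$. By the symmetry each genuine pair contributes $2a_i$, an even amount, so only a fixed index $i=n-i$ can affect the parity of $\chi$. If $n$ is odd there is no such index and $\chi$ is even, forcing the factor to be $+1$. If $n$ is even the single unpaired term is $a_{n/2}=b_n$, whence $\chi\equiv b_n\pmod 2$, and the factor is therefore $-1$ precisely when $n$ is even and $b_n$ is odd.

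I do not anticipate a genuine obstacle: Deligne's theorem together with the comparison theorem has already supplied the decisive facts, namely the shape of $N(q)$ and Poincar\'e duality, so the argument is essentially formal manipulation of a palindromic product. The one place that demands a little care is the parity bookkeeping for the sign, where one must correctly isolate the middle Betti number $b_n$ as the unique unpaired contribution in the even-dimensional case.
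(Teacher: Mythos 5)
Your proposal is correct and follows essentially the same route as the paper: the substitution $s\mapsto n-s$, extraction of the sign $(-1)^{\chi}$, reindexing via Poincar\'e duality $b_{2n-2i}=b_{2i}$, and the parity analysis of $\chi$ isolating the middle Betti number $b_n$ when $n$ is even. No gaps.
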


\begin{proof}
 We calculate
 \begin{eqnarray*}
   \zeta_\cX(n-s)  & = &   \prod_{i=0}^n ((n-s)-i)^{-b_{2i}} \\
                   & = &   \prod_{i=0}^n (-1)^{b_{2i}}(s-(n-i))^{-b_{2i}} \\
                   & = &   (-1)^\chi\ \prod_{i=0}^d (s-(n-i))^{-b_{2n-2i}} \\
 \end{eqnarray*}
 where we used $b_{2n-2i}=b_{2i}$ in the last equation. If we now substitute $i$ by $n-i$ in this expression, we obtain
 $$ \zeta_\cX(n-s) \quad = \quad (-1)^\chi\ \prod_{i=0}^n (s-i)^{-b_{2i}} \quad = \quad (-1)^\chi\ \zeta_\cX(s). $$
 If $n$ is odd, then there is an even number of non-trivial Betti numbers and $\chi=2b_0+2b_2+\dotsb+2b_{n-1}$ is even. If $n$ is odd, then $\chi=2b_0+2b_2+\dotsb+2b_{n-2}+b_n$ has the same parity as $b_n$. Thus the additional statement.
\end{proof}

\begin{rem}
 Note the similarity with the functional equation for motivic zeta functions as in \cite[Thm.\ 1]{Kahn}. Amongst other factors, also $(-1)^{\chi(M)}$ appears in the functional equation of the zeta function of a motive $M$ where $\chi(M)$ is the (positive part of the) Euler characteristic of $M$.
\end{rem}

\bigskip

\section{The functional equation for reductive groups over $\Fun$}

\noindent
The above observations imply further a functional equation for reductive group schemes over $\Fun$. Note that Soule's and Connes and Consani's approaches towards $\Fun$-geometry indeed succeeded in descending split reductive group schemes from $\Z$ to $\Fun$ (cf.\ \cite{CC08}, \cite{LL09}, \cite{Lorscheid2009}).

Let $G$ be a split reductive group scheme of rank $r$ with Borel group $B$ and maximal split torus $T\subset B$. Let $N$ be the normalizer of $T$ in $G$ and $W=N(\Z)/T(\Z)$ be the Weyl group. The Bruhat decomposition of $G$ (with respect to $T$ and $B$) is the morphism
$$ \coprod_{w\in W} BwB \quad \longrightarrow \quad G \;, $$
induced by the subscheme inclusions $BwB\to G$, which has the property that it induces a bijection between the $k$-rational points for every field $k$. We have $B\simeq \Gm^r\times\A^N$ as schemes where $N$ is the number of positive roots of $G$, and $BwB\simeq \Gm^r\times\A^{N+\lambda(w)}$ where $\lambda(w)$ is the length of $w\in W$. With this we can calculate the counting polynomial of $G$ as
$$ N(q) \quad = \quad \# \coprod_{w\in W} BwB (\F_q) \quad = \quad (q-1)^r q^N \sum_{w\in W} q^{\lambda(w)}. $$

The quotient variety $G/B$ is a smooth projective scheme of dimension $N$ with counting function $N_{G/B}(q)=\big((q-1)^{r} q^{N}\big)^{-1} N(q)=\sum_{w\in W} q^{\lambda(w)}$. Let $b_0,\dotsc,b_{2N}$ be the Betti numbers of $G/B$, then we know from the previous section that $N_{G/B}(q)=\sum_{l=0}^N b_{2l}q^l$ and that $b_{2N-2l}=b_{2l}$.

Thus we obtain for the counting polynomial of $G$ that 
\begin{eqnarray*}
 N(q) & = & q^N \ \ \Bigg(\sum_{k=0}^r (-1)^{r-k}\binom rk q^k\Bigg)\ \cdot \ \Bigg(\sum_{l=0}^N b_{2l}q^l\Bigg) \\
      & = & \sum_{i=0}^{d} \ \ \Bigg(\sum_{k+l=i-N} (-1)^{r-k}\binom rk b_{2l}\Bigg)\ \ q^i \\
\end{eqnarray*}
where $d=r+2N$ is the dimension of $G$ and with the convention that $\binom rk=0$ if $k<0$ or $k>r$. Denote by $a_i=\sum_{k+l=i-N} (-1)^{r-k}\binom rk b_{2l}$ the coeffients of $N(q)$.

\begin{lemma}
 We have $a_0=\dotsb=a_{N-1}=0$ and $a_{d-i} \ = \ (-1)^r\ a_{i+N}$.
\end{lemma}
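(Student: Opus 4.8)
The plan is to read everything off the factorization $N(q) = q^N\,(q-1)^r\,N_{G/B}(q)$ that is implicit in the two displayed factors: the first sum $\sum_{k=0}^r(-1)^{r-k}\binom rk q^k$ is exactly $(q-1)^r$, and the second is $N_{G/B}(q)=\sum_{l=0}^N b_{2l}q^l$. The first assertion $a_0=\dotsb=a_{N-1}=0$ is then immediate, since the factor $q^N$ forces the lowest monomial of $N(q)$ to be $q^N$. Concretely, in the convolution $a_i=\sum_{k+l=i-N}(-1)^{r-k}\binom rk b_{2l}$ the index set $\{(k,l):k+l=i-N,\ 0\le k\le r,\ 0\le l\le N\}$ is empty whenever $i<N$, because then $i-N<0$ while $k,l\ge 0$.

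For the relation $a_{d-i}=(-1)^r a_{i+N}$ I would argue by a reflection of the summation indices. First I would spell out both coefficients from the definition: since $(i+N)-N=i$ and, with $d=r+2N$, $(d-i)-N=r+N-i$, one has $a_{i+N}=\sum_{k+l=i}(-1)^{r-k}\binom rk b_{2l}$ and $a_{d-i}=\sum_{k+l=r+N-i}(-1)^{r-k}\binom rk b_{2l}$. I would then apply the involution $(k,l)\mapsto(r-k,\,N-l)$ to the sum for $a_{d-i}$. This map is a bijection of $\{0,\dots,r\}\times\{0,\dots,N\}$ onto itself, and it carries the constraint $k+l=r+N-i$ precisely onto $k'+l'=i$, so the reflected index set is exactly the one occurring in $a_{i+N}$.

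It then remains to track how the summand transforms, and here the two available symmetries cooperate: $\binom rk=\binom r{r-k}$ leaves the binomial coefficient fixed, while Poincar\'e duality for $G/B$ (stated above as $b_{2N-2l}=b_{2l}$) gives $b_{2l}=b_{2(N-l')}=b_{2l'}$. Only the sign moves, via $(-1)^{r-k}=(-1)^{r-(r-k')}=(-1)^{k'}=(-1)^r(-1)^{r-k'}$, so that factoring out the global $(-1)^r$ yields $a_{d-i}=(-1)^r\sum_{k'+l'=i}(-1)^{r-k'}\binom r{k'}b_{2l'}=(-1)^r a_{i+N}$. Equivalently, both claims can be packaged as the single functional equation $q^{\,d+N}\,N(1/q)=(-1)^r\,N(q)$ for the counting polynomial, which one obtains by noting that $N_{G/B}(q)$ is palindromic and that $(q-1)^r$ is palindromic up to the sign $(-1)^r$.

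I do not expect a genuine obstacle. The only point needing care is the bookkeeping of the reflection: one must check that the involution respects the ranges $0\le k\le r$ and $0\le l\le N$ (so that no boundary terms are dropped) and that the binomial symmetry and Poincar\'e duality combine to reflect the index set while leaving behind exactly one global factor $(-1)^r$. In essence the statement says that $N(q)$ is, up to the monomial $q^N$ and the sign $(-1)^r$, palindromic, and the whole proof is just making this symmetry precise at the level of coefficients.
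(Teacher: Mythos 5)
Your proof is correct and follows essentially the same route as the paper: the vanishing of $a_0,\dots,a_{N-1}$ from the $q^N$ factor (empty index set), and the symmetry $a_{d-i}=(-1)^r a_{i+N}$ via the substitution $(k,l)\mapsto(r-k,N-l)$ combined with $\binom rk=\binom r{r-k}$ and $b_{2N-2l}=b_{2l}$. Your added care about the index ranges and the equivalent palindromic reformulation $q^{d+N}N(1/q)=(-1)^rN(q)$ are fine but do not change the argument.
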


\begin{proof}
 The first statement follows from the fact that $N(q)$ is divisible by $q^N$. For the second statement we use the symmetries $\binom rk=\binom r{r-k}$ and $b_{2N-2l}=b_{2l}$ to calculate
 \begin{eqnarray*}
  a_{d-i} & = & \sum_{k+l=d-i-N} (-1)^{r-k}\binom rk b_{2l} \\
          & = & \sum_{k+l=d-i-N} (-1)^r(-1)^k\binom r{r-k} b_{2N-2l}.
 \end{eqnarray*}
 When we substitute $k$ by $r-k$ and $l$ by $N-l$ in this equation and use $d=r+2N$, then we obtain
 $$ a_{d-i} \quad = \quad (-1)^r \sum_{k+l=(i+N)-N} (-1)^{r-k}\binom rk b_{2l}, $$
 which is the same as $(-1)^ra_{i+N}$.
\end{proof}

Suppose $G$ has an elusive model $\cG$ over $\Fun$. Then $\cG$ has the zeta function $\zeta_\cG(s)=\prod_{i=0}^n (s-i)^{-a_{i}}$. Let $\chi=N(1)=\sum_{i=0}^d a_i$.

\begin{thm}
 The zeta function $\zeta_\cG(s)$ satisfies the functional equation
 $$ \zeta_\cG(r+N-s) \quad = \quad (-1)^\chi \ \Big(\zeta_\cG(s) \Big)^{(-1)^r}. $$
\end{thm}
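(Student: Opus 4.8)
The plan is to run the same computation as in the Theorem of \S2, now using the full strength of the Lemma in place of Poincar\'e duality alone. Writing $\zeta_\cG(s) = \prod_{i=0}^{d} (s-i)^{-a_i}$ with $d = r+2N$, I would first substitute $s \mapsto (r+N)-s$ and extract the sign from each factor exactly as before: since $\big((r+N)-s\big)-i = -\big(s-((r+N)-i)\big)$, each factor contributes $(-1)^{-a_i} = (-1)^{a_i}$, and because $\sum_i a_i = \chi$ these assemble into the single global constant $(-1)^\chi$. This reduces the assertion to the combinatorial identity
\[
  \prod_{i=0}^{d} \big(s - ((r+N)-i)\big)^{-a_i} \;=\; \Big(\prod_{i=0}^{d} (s-i)^{-a_i}\Big)^{(-1)^r},
\]
that is, to comparing the exponent of each linear factor $(s-j)$ on the two sides.

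I would then reindex the left-hand product along the reflection $j = (r+N)-i$, so that the exponent of $(s-j)$ becomes $-a_{(r+N)-j}$, while on the right it is $-(-1)^r a_j$. The claim therefore rests on a single pointwise symmetry of the coefficients under this reflection, which is to be matched against the second assertion of the Lemma, $a_{d-i} = (-1)^r a_{i+N}$; the factor $(-1)^r$ occurring there is precisely what is to become the overall exponent $(-1)^r$ on $\zeta_\cG(s)$. The first assertion of the Lemma, $a_0 = \dotsb = a_{N-1} = 0$, enters to guarantee that the reflection carries the supported range of the $a_i$ into itself, so that no factor $(s-j)$ with $j$ outside $\{0,\dotsc,d\}$ is created; this is the analogue of the way the product in \S2 closes up under $i \mapsto n-i$.

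The step I expect to be the main obstacle is exactly this reconciliation of the reflection point with the symmetry center of the coefficients, together with checking that the two substitutions encoded in the Lemma --- the binomial symmetry $\binom rk = \binom r{r-k}$ and Poincar\'e duality $b_{2l} = b_{2N-2l}$ --- conspire to produce one overall sign $(-1)^r$ rather than two. Concretely I would verify the needed symmetry straight from $a_i = \sum_{k+l = i-N}(-1)^{r-k}\binom rk b_{2l}$ by substituting $k \mapsto r-k$ and $l \mapsto N-l$, tracking how $i$ is sent to its reflected value and how the sign accumulates, and then use the vanishing of the low coefficients to pass from an identity of formal sums to one of the finite products defining $\zeta_\cG$. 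The quickest way to calibrate the precise reflection point and the placement of the sign before committing to the general bookkeeping is the rank-one case $G = \SL_2$, where $N(q) = q^3 - q$ and the functional equation can be read off by hand.
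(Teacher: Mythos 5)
Your reduction is sound as far as it goes, and you have put your finger on exactly the right spot---but the step you defer (``the reconciliation of the reflection point with the symmetry center'') is not bookkeeping: it fails, because the statement as printed is false. The Lemma's symmetry $a_{d-i}=(-1)^r a_{i+N}$ pairs indices summing to $d+N=r+3N$, i.e.\ $a_j=(-1)^r a_{(r+3N)-j}$, so the coefficients are (anti-)palindromic about $(r+3N)/2$, not about $(r+N)/2$ as the claimed equation requires. Your own range check detects this: under $j=(r+N)-i$ the support $\{N,\dotsc,d\}$ of the $a_i$ is carried to $\{-N,\dotsc,r\}$, which leaves $\{0,\dotsc,d\}$ as soon as $N>0$---the reflection point is off by exactly $2N$. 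And your proposed $\SL_2$ calibration refutes the printed equation outright: $N(q)=q^3-q$ gives $\zeta_\cG(s)=(s-1)/(s-3)$ and $\chi=N(1)=0$, so the right-hand side is $(s-3)/(s-1)$, while $\zeta_\cG(2-s)=(s-1)/(s+1)$; on the other hand $\zeta_\cG(4-s)=(s-3)/(s-1)$, with $4=r+3N$. The correct theorem is $\zeta_\cG(r+3N-s)=(-1)^\chi\big(\zeta_\cG(s)\big)^{(-1)^r}$.

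For comparison, the paper's own proof takes exactly the route you sketch, but at the decisive step it silently substitutes $a_i=(-1)^r a_{d-N-i}$, whereas the Lemma yields $a_i=(-1)^r a_{d+N-i}$; this index slip (writing $d-N$ where $d+N$ is forced) is what manufactures the spurious reflection point $r+N=d-N$. With the corrected center your outline closes precisely as planned: write $\zeta_\cG(d+N-s)=\prod_i\big(d+N-s-i\big)^{-a_i}$, replace $a_i$ by $(-1)^r a_{d+N-i}$, and reindex by $i\mapsto d+N-i$, which preserves the support $\{N,\dotsc,d\}$---this is where the first assertion of the Lemma enters, just as you anticipated; extracting one sign per factor then gives $(-1)^{\sum a_i}=(-1)^\chi$, a factor that is in fact always $+1$ for $r\geq 1$, since $(q-1)^r$ divides $N(q)$ forces $\chi=N(1)=0$. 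So your plan is the right one; carried out honestly (in particular, actually running the $\SL_2$ test before committing to the bookkeeping, as you propose) it proves the corrected functional equation and exposes the error in both the paper's statement and its proof.
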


\begin{proof}
 We use of the previous lemma and $r+N=d-N$ to calculate that
 \begin{eqnarray*}
  \zeta_\cG(r+N-s) & = & \prod_{i=0}^n (r+N-s-i)^{-a_{i}} \\
                   & = & \prod_{i=0}^n (d-N-s-i)^{-(-1)^ra_{d-N-i}}. \\
 \end{eqnarray*}
 After substituting $i$ by $d-N-i$, we find that
 \begin{eqnarray*}
  \zeta_\cG(d-N-s) & = & \prod_{i=0}^n (i-s)^{-(-1)^ra_{i}} \\
                   & = & (-1)^{\sum a_i} \big(\prod_{i=0}^n (s-i)^{-a_{i}}\big)^{(-1)^r} \\
                   & = & (-1)^\chi \ \Big(\zeta_\cG(s) \Big)^{(-1)^r}. \hspace{3,3cm} \qedhere \hspace{-4cm} \ 
 \end{eqnarray*}
\end{proof}

\begin{rem}
 Kurokawa calculates the $\Fun$-zeta functions of $\P^n$, $\GL(n)$ and $\SL(n)$ in \cite{Kurokawa05}. One can verify the functional equation for these examples immediately.
\end{rem}

\bigskip

\bibliographystyle{plain}

\end{document}